\newcommand{\mb}{\mathbf}
\newcommand{\mc}{\mathcal}
\newcommand{\la}{\langle}
\newcommand{\ra}{\rangle}
\newtheorem{thm}{Theorem}[section]
\newtheorem{pro}[thm]{Proposition}
\newtheorem{cor}[thm]{Corollary}
\newtheorem{lem}[thm]{Lemma}
\theoremstyle{definition}
\newtheorem{df}[thm]{Definition}
\newtheorem{rem}[thm]{Remark}
 \newtheorem{con}[thm]{Conjecture}
\begin{document}
	\title{Local initial segments of the Turing degrees}
	\author{Bj\o rn Kjos-Hanssen}

	\address{Mathematisches Institut\\ Ruprecht-Karls-Universit\"at Heidelberg\\
	D-69120 Heidelberg, Germany}

	\email{kjos@math.berkeley.edu}

	\keywords{Initial segments, lattices, Turing degrees}
	\subjclass{Primary 03D28, 03D25;

	Secondary 06A12, 06A15}

	\begin{abstract}
	Recent results on initial segments of the Turing degrees are presented, and
	some conjectures about initial segments that have implications for the existence of non-trivial automorphisms of the Turing degrees are indicated.
	\end{abstract}

	\maketitle

	\section{Introduction}
		This article concerns the algebraic study of the upper
		semilattice of Turing degrees. Upper semilattices of interest in this regard tend
		to have a least element, hence for convenience the following definition is made.

		\begin{df}\label{df: 1.1}
			A unital semilattice (usl) is a structure $L = (L, *, e)$ satisfying the following equalities for all $a, b, c \in L$.
			\begin{enumerate}
				\item $a * (b * c) = (a * b) * c$,
				\item $a * b = b * a$,
				\item $a * a = a$, and
				\item $a * e = a$.
			\end{enumerate}
			A bounded unital semilattice (busl) is a structure $L = (L, *, e, z)$ such that
			$(L, *, e)$ is an usl and such that the following equality holds for all $a\in L$:
			\begin{enumerate}
				\item[5.] $a * z = z$.
			\end{enumerate}
		\end{df}

		The Turing degrees $\mc D$ = ($\mc D$, $\cup$, 0) where 0 is the degree of the recursive sets
		and a $\cup$ b is the join (least upper bound) of the degrees a, b is an example of an
		usl. Of course semilattices have natural orderings; that is a $\leq$ b $\leftrightarrow$ a $\cup$ b = b,
		and a $\leq$ b just in case there are sets A, B $\subseteq$ $\omega$ of degree a, b, respectively, such
		that A can be computed using an oracle for B. Note that $\mc D$ has size $2^{\aleph_0}$
		and has the countable predecessor property, i.e. for no $\mb a\in\mc D$ are there uncountably
		many $\mb b < \mb a$. The jump operator $\mb a \mapsto \mb a'$ is included in the language under
		consideration below. Note however that in any ideal where the jump is invariant
		or definable, such as $\mc D$ itself \cite{r20}, jump can be removed from the language.

		An \emph{initial segment} of $\mc D$ is a subset of $\mc D$ which is downward closed. An \emph{ideal}
		of $\mc D$ is a set $\mc I \subseteq \mc D$ such that there exists an usl $\mc L$ and an usl homomorphism
		$\varphi:\mc D\rightarrow \mc L$ such that $\mb a \in\mc I \leftrightarrow \varphi(\mb a) = e$.
		\begin{pro}\label{pro: 1.2}
			Equivalently, $\mc I$ is a nonempty initial segment and closed under join.
		\end{pro}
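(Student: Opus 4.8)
The plan is to prove the two implications that together say the two notions of ideal coincide.

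For the direction "ideal $\Rightarrow$ nonempty initial segment closed under join", suppose $\mc I=\varphi^{-1}(e)$ for some usl $\mc L$ and usl homomorphism $\varphi\colon\mc D\to\mc L$. Since $\varphi$ is an usl homomorphism it sends the unit $\mb 0$ of $\mc D$ to the unit $e$ of $\mc L$, so $\mb 0\in\mc I$ and $\mc I\neq\emptyset$. If $\mb a,\mb b\in\mc I$ then $\varphi(\mb a\cup\mb b)=\varphi(\mb a)*\varphi(\mb b)=e*e=e$, so $\mb a\cup\mb b\in\mc I$. Finally, if $\mb a\in\mc I$ and $\mb b\le\mb a$, then $\mb a=\mb b\cup\mb a$, whence $e=\varphi(\mb a)=\varphi(\mb b)*\varphi(\mb a)=\varphi(\mb b)*e=\varphi(\mb b)$, so $\mb b\in\mc I$. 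Each line uses exactly one defining identity of an usl, so this direction is essentially automatic.

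For the converse, suppose $\mc I$ is a nonempty initial segment closed under join. First, $\mb 0\in\mc I$, since picking any $\mb a\in\mc I$ we have $\mb 0\le\mb a$ and $\mc I$ is downward closed. Now define a relation on $\mc D$ by $\mb a\equiv\mb b$ iff there is $\mb c\in\mc I$ with $\mb a\cup\mb c=\mb b\cup\mb c$. I would check that $\equiv$ is a congruence on the usl $\mc D$: reflexivity uses $\mb 0\in\mc I$; symmetry is immediate; transitivity follows by replacing the two witnessing elements of $\mc I$ by their join (this is where closure of $\mc I$ under join enters); and compatibility with $\cup$ holds since $\mb a\cup\mb c=\mb b\cup\mb c$ gives $(\mb a\cup\mb x)\cup\mb c=(\mb b\cup\mb x)\cup\mb c$ for all $\mb x$. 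Let $\mc L=\mc D/{\equiv}$ be the quotient usl, with $[\mb a]*[\mb b]=[\mb a\cup\mb b]$ and unit $e=[\mb 0]$, and let $\varphi$ be the canonical projection, an usl homomorphism by construction.

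It remains to check $\varphi^{-1}(e)=\mc I$, which is the step that actually exploits the hypotheses. If $\varphi(\mb b)=[\mb 0]$ then $\mb b\cup\mb c=\mb 0\cup\mb c=\mb c$ for some $\mb c\in\mc I$, i.e.\ $\mb b\le\mb c$, so $\mb b\in\mc I$ by downward closure; conversely if $\mb b\in\mc I$ then taking $\mb c=\mb b$ gives $\mb b\cup\mb c=\mb b=\mb 0\cup\mb c$, so $\varphi(\mb b)=e$. I do not anticipate a real obstacle: the only points to be careful about are that a homomorphism of unital semilattices is taken to preserve the distinguished constant (so that $\varphi(\mb 0)=e$ and hence $\mc I\neq\emptyset$), and that the quotient is formed purely as an abstract usl, so no recursion-theoretic input about $\mc D$ is needed — the proposition is a statement about unital semilattices.
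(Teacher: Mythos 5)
Your proof is correct and takes essentially the same approach as the paper: both directions are the same, and the key construction in the converse is the identical quotient by the relation $\mb a\equiv\mb b\leftrightarrow\exists\,\mb c\in\mc I\ (\mb a\cup\mb c=\mb b\cup\mb c)$. You carry out a few routine verifications (that $\equiv$ is a congruence, that $\varphi^{-1}(e)=\mc I$, and closure under join in the easy direction) more explicitly than the paper does, but the argument is the same.
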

		\begin{proof}
			In one direction, let $\mc L$ be the set of equivalence classes of the relation
			$E$ defined by $a E b \leftrightarrow \mb a \cup \mb i = \mb b \cup \mb i$ for some $\mb i\in\mc I$,
			and let $*$ be defined by
			$\varphi(\mb a) * \varphi(\mb b) = \varphi(\mb a \cup\mb b)$.
			To check that this is well-defined, suppose $\mb a_k$, $\mb b_k$ are
			in $\mc D$, and $\varphi(\mb a_k) = \varphi(\mb b_k)$, for $k = 0, 1$.
			Then there exists $\mb i_k$ in $\mc I$ such that
			$\mb a_k \cup \mb i_k = \mb b_k \cup \mb i_k$ for $k = 0, 1$. Let $\mb i = \mb i_0 \cup \mb i_1$.
			Then $\mb a_0\cup\mb b_0\cup\mb i = \mb a_1\cup\mb b_1\cup\mb i$, so
			$\varphi(\mb a_0\cup\mb b_0) = \varphi(\mb a_1\cup\mb b_1)$, as desired. In the other direction, $\mc I$ is nonempty since
			it contains $\mb 0$, and if $\mb a\leq\mb b\in\mc I$ then $\mb a\cup\mb b = \mb b$ so
			$\varphi(\mb a) = \varphi(\mb a)*e = \varphi(\mb a)*\varphi(\mb b) = \varphi(\mb a\cup\mb b) = \varphi(\mb b) = e$.
		\end{proof}

		Spector \cite{r22} found the first nontrivial usl isomorphism type of an ideal of $\mc D$;
		namely, an ideal with two elements. This was extended step by step, to all finite
		lattices by Lerman \cite{r10}, to all countable usls by Lachlan and Lebeuf \cite{r9} and to all
		size $\leq\aleph_1$ usls with the countable predecessor property by Abraham and Shore \cite{r1}.
		But Groszek and Slaman \cite{r5} showed that it is consistent with ZFC that the
		same statement with $2^{\aleph_0}$ in place of $\aleph_1$ is false.

		\emph{Local} initial segments are initial segments of the degrees below a given degree,
		for example a degree above $\mb 0''$ or a nonzero r.e.\ degree.

		By an \emph{automorphism} of $\mc D$ is meant an usl automorphism of $\mc D$. Since the
		jump operator is definable in the language $\{\cup\}$ \cite{r20}, every automorphism is jump-preserving.
		Slaman and Woodin \cite{r21} showed that if $\pi$ is an automorphism of $\mc D$
		and $\mb x\geq\mb 0''$ then $\pi(\mb x) = \mb x$.

		Another proof of this fact using Shore's coding with exact pair technique was
		presented in \cite{r15}. Historically, initial segments were used to obtain partial results
		toward the rigidity of the Turing degrees. The work of Slaman and Woodin
		rendered this use obsolete; however recent results may revive this application of
		initial segment theory.

	\section{Results and applications}
		 This section starts with a presentation of a
		new proof of rigidity above $\mb 0''$ that uses the results on ideals of $\mc D$ of \cite{r17}.
		\begin{df}\label{df: 2.1}
			Let $\mb a$ be a Turing degree. An usl $\mc U$ is called $\Sigma^0_1(\mb a)$-presentable
			if there exist countable structures $\mc L = (L, \leq, \lor)$ and $\mc L' = (\omega, \leq' , \lor' )$ such that
			$\mc U$ is isomorphic to $\mc L$ and such that the following conditions hold.
			\begin{enumerate}
				\item $\leq'$ is a transitive and reflexive binary relation on $\omega$ which is r.e.\ in $\mb a$,
				\item $\lor'$ is a binary operation on $\omega$ which is recursive in $\mb a$,
				\item $L$ is the set of equivalence classes of the equivalence relation on $\omega$ given by
				$a\leq' b \,\&\, b \leq' a$,
				\item $\leq$ is the

					relation on $L$ induced by $\leq'$, and
				\item $\lor$ is the operation on

					$L$ induced by $\lor'$.
			\end{enumerate}
		\end{df}

		For $n\geq 0$, $\mc U$ is called $\Sigma^0_{n+1}(\mb a)$-presentable if it is $\Sigma^0_1(\mb a^{(n)})$-presentable, where
		$\mb a^{(n)}$ denotes the $n$th jump of $\mb a$. $\mc U$ is called $\mb a$-presentable if there exist such $\mc L$,
		$\mc L'$ with $\leq'$ recursive in $\mb a$, and $\mc L$ isomorphic to $\mc L'$.

		\begin{lem}\label{lem: 2.2}
			$[\mb a, \mb b]$ is $\Sigma^0_3(\mb b)$-presentable, for any Turing degrees $\mb a\leq\mb b$.
		\end{lem}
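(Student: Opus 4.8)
The plan is to present $[\mb a,\mb b]$ by means of Turing functionals relative to a fixed representative of $\mb b$. Fix sets $A\in\mb a$ and $B\in\mb b$ with $A\leq_T B$, let $\Phi_0,\Phi_1,\dots$ enumerate the Turing functionals, and for each $e$ put $C_e=A\oplus\Phi_e^B$ if $\Phi_e^B$ is total and $C_e=A$ otherwise; thus $\mb a\leq\deg(C_e)\leq\mb b$ for every $e$. The map $e\mapsto\deg(C_e)$ is onto $[\mb a,\mb b]$: given $\mb c$ in this interval, choose $C\in\mb c$ with $C\leq_T B$ (possible since $\mb c\leq\mb b$), write $C=\Phi_e^B$ with $\Phi_e^B$ total, and observe $C_e=A\oplus C\equiv_T C$ since $C\geq_T A$. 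I would then define $e\leq'e'$ to hold iff $C_e\leq_T C_{e'}$, which is reflexive and transitive because $\leq_T$ is. For the operation, since $A\leq_T B$ fix an index $a_0$ with $\Phi_{a_0}^B=A$, and by the $s$-$m$-$n$ theorem fix recursive $\sigma,g$ with $\Phi_{\sigma(e)}^X=\Phi_{a_0}^X\oplus\Phi_e^X$ and $\Phi_{g(j,k)}^X=\Phi_j^X\oplus\Phi_k^X$ for all oracles $X$; putting $h(e)=\sigma(e)$ when $\Phi_e^B$ is total and $h(e)=a_0$ otherwise, we get $\Phi_{h(e)}^B=C_e$, so I would define $e\lor'e'=g(h(e),h(e'))$. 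Then $C_{e\lor'e'}=A\oplus C_e\oplus C_{e'}\equiv_T C_e\oplus C_{e'}$, hence $\deg(C_{e\lor'e'})=\deg(C_e)\cup\deg(C_{e'})$. So $\leq'$ and $\lor'$ descend, via $e\mapsto\deg(C_e)$, to the ordering and join of $[\mb a,\mb b]$, and the quotient of $(\omega,\leq',\lor')$ by the relation $e\leq'e'\ \&\ e'\leq'e$ is isomorphic to $[\mb a,\mb b]$, as Definition~\ref{df: 2.1} requires.

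Next I would bound the complexity. The set $T=\{e:\Phi_e^B\text{ is total}\}$ is $\Pi^0_2(B)$, hence recursive in $B''$, which has degree $\mb b''$; so $h$ is recursive in $\mb b''$ and therefore so is $e\lor'e'=g(h(e),h(e'))$. For the ordering, $e\leq'e'$ iff $\Phi_{h(e)}^B\leq_T\Phi_{h(e')}^B$, and for indices $j,k$ with $\Phi_j^B,\Phi_k^B$ total the statement $\Phi_j^B\leq_T\Phi_k^B$ asserts that for some functional index $i$, for every $n$ there is a stage $s$ by which $\Phi_i$ with oracle $\Phi_k^B$ halts at $n$ with value $\Phi_j^B(n)$; the matrix of this assertion is recursive in $B$ (the finitely many oracle queries about $\Phi_k^B$, and the value $\Phi_j^B(n)$, are computed from $B$ using totality of $j$ and $k$), so the assertion is $\Sigma^0_3(B)$, hence r.e.\ in $B''$, i.e.\ in $\mb b''$. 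Composing with the $\mb b''$-recursive $h$ shows that $\leq'$ is r.e.\ in $\mb b''$. Therefore $[\mb a,\mb b]$ is $\Sigma^0_1(\mb b'')$-presentable, which is precisely $\Sigma^0_3(\mb b)$-presentability.

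The only genuinely delicate point is the handling of partial functionals. Adjoining $A$ and using the fallback $C_e=A$ for non-total indices keeps $e\mapsto\deg(C_e)$ inside $[\mb a,\mb b]$ without altering the isomorphism type, since $\mb a$ is already the least element of the interval; and deciding non-totality of $\Phi_e^B$ costs exactly one $\mb b''$-oracle question, which is the resource that $\Sigma^0_3(\mb b)$-presentability permits. Granting this, the remaining verifications — reflexivity and transitivity of $\leq'$, that $\lor'$ induces join, that $e\mapsto\deg(C_e)$ is onto, and the counting of quantifiers — are routine.
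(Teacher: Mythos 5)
Your proof is correct and uses essentially the same approach as the paper: both present $[\mb a,\mb b]$ via indices of Turing functionals applied to a fixed $B\in\mb b$, using that totality of $\{i\}^B$ is $\Pi^0_2(B)$ and $\{j\}^B\leq_T\{k\}^B$ (for total indices) is $\Sigma^0_3(B)$, so that the order is r.e.\ in $\mb b''$ and the join (built from an effective $\oplus$ on indices) is recursive in $\mb b''$. The only cosmetic difference is that the paper enumerates the $\Sigma^0_3(B)$ set of total indices above $\mb a$ by an injective $h\leq_T B''$ and pulls the structure back along $h$, whereas you keep all of $\omega$ and normalize non-total indices down to $\mb a$ via a fallback; both handle the partiality problem equally well.
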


		\begin{proof}
			Let $B\in\mb b$, $A\in\mb a$, choose $e\in\omega$ such that $A = \{e\}^B$, the $e$th Turing
			functional applied to $B$, and let
			\[
				C = \{i \mid \{i\}^B\text{ is total and }\{e\}^B\leq_T\{i\}^B \}.
			\]
			The set $C$ is $\Sigma^0_3(B)$ by a standard argument, so

			$C = \{h(n) | n <\omega\}$ for some injective $h\leq_T B''$; define $i\leq' j\leftrightarrow \{h(i)\}^B\leq_T\{h(j)\}^B$.

			If $f_i$ for $i\in\{0, 1\}$ are recursive functions such that every element of $\omega$ is in
			the range of exactly one $f_i$, then a representative of the Turing degree $\mb a\cup\mb b$ is
			given by $A\oplus B = \{f_0(x) \mid x \in A\} \cup \{f_1(x) \mid x \in B\}$. By abuse of notation, define
			a recursive function $\oplus$ such that for $e_0$, $e_1\in\omega$, $X\subseteq\omega$ and $i\in\{0, 1\}$,
			$\{e_0\oplus e_1\}^X(f_i(x)) = \{e_i\}^X(x)$.
			Clearly if the $\{e_i\}^X$ are (characteristic functions of) subsets of $\omega$ then
			$\{e_0\oplus e_1\}^X = \{e_0\}^X\oplus\{e_1\}^X$.
			Now define $i\lor' j = h^{-1}(h(i)\oplus h(j))$ and observe that the conditions of Definition \ref{df: 2.1} obtain.
		\end{proof}

		\begin{proof}[Third proof of rigidity above $\mb 0''$.]
			Suppose $\pi$ is an automorphism of $\mc D$
			and $\mb x\in\mc D$. Then $(\mc D, \mb x) \cong (\mc D, \pi\mb x)$;
			in particular, if one defines $[\mb a, \mb b] = \{\mb y \in\mc D \mid \mb a\leq\mb y\leq\mb b\}$, then $[\mb 0, \mb x]\cong [\mb 0, \pi\mb x]$.
			Also, using the invariance of the jump
			under $\pi$, if one defines $I_{\mb x} = \{$ usl isomorphism types of initial segments $[\mb 0, \mb g]$
			with $\mb g'' \leq x$ $\}$, then it follows that $I_{\mb x} = I_{\pi\mb x}$.

			Now define $S_{\mb x} = \{\text{ $\Sigma_1(x)$-presentable usls }\}$. Assuming that $\mb x \geq \mb 0''$, it is
			shown in \cite{r17} that $I_{\mb x} = S_{\mb x}$. It is also possible to show that for any Turing
			degrees $\mb x$, $\mb y$, if $S_{\mb x} = S_{\mb y}$ then $\mb x = \mb y$ by constructing appropriate usls, along the
			lines of the use of Slaman--Woodin sets in \cite{r15}. Now if $\mb x\geq \mb 0''$ then by invariance
			of the jump also $\pi\mb x\geq\mb 0''$, so $S_{\mb x} = I_{\mb x} = I_{\pi\mb x} = S_{\pi\mb x}$, so $\mb x = \pi\mb x$,
			completing the proof.
		\end{proof}

		The inclusion $I_{\mb x} \subseteq S_{\mb x}$ holds for any degree $\mb x$. Namely, $[\mb 0, \mb g]$ is $\Sigma_1(\mb g'')$-presentable,
		hence since $\mb g''\leq \mb x$ also $\Sigma_1(\mb x)$-presentable.

		For the inclusion $S_{\mb x} \subseteq I_{\mb x}$ one constructs, given a $\Sigma_1(\mb x)$-presentable usl $\mc L$,
		a function $g:\omega\rightarrow\omega$ and functions $g_k:\omega\rightarrow\omega$ for each $k\in\mc L$, such that
		$[\mb 0, \mb g] = \{g_k \mid k \in L\}$ and $L\models k\leq m$ just in case $g_k \leq_T g_m$, and such that
		$\mb g''\leq\mb x$. This is done by breaking the problem up into the following requirements:
		\begin{enumerate}
			\item For each $e$, if $\{e\}^g$
				is total then there exists $k$ such that $\{e\}^g\equiv_T g_k$. This
				is accomplished using an elaboration of Spector's splitting tree technique.
			\item If $k\not\leq m$, then $g_k \not\leq_T g_m$. This is accomplished using a diagonalization
				argument, see, e.g., \cite{r11}.
			\item It can be determined recursively in $\mb x$ and uniformly in $e$ whether or not $\{e\}^g$
				is total. This is accomplished using $e$-total trees, a well-known technique,
				see, e.g., \cite{r11}.
			\item If $k \leq m$ then $g_k \leq_T g_m$.
		\end{enumerate}

		The requirements under item 4 are satisfied using a representation of $\mc L$ as a set
		of equivalence relations, much as in \cite{r9}. The methods of that paper can be used
		to obtain the analogous result for $\mb x$-presentable instead of $\Sigma_1(\mb x)$-presentable
		usls. The main obstacle faced in extending their result was to nd a suitable usl
		representation. Instead of strengthening the already quite complicated representations of \cite{r9},
		a natural representation was found based on a paper of Pudl\'ak \cite{r16},
		a paper that has been celebrated for quite different reasons in the past among lattice theorists.

		The proof sketched above uses the theorem of Slaman--Woodin that all automorphisms preserve the double jump,
		so it does not replace the Slaman--Woodin
		manuscript. Perhaps the proofs and results above are more interesting in relation
		to rigidity above $\mb 0'$, which might be solvable using initial segments.

		Another proof can be given which does not require the new results on initial
		segments but is more roundabout:

		\begin{proof}[Fourth proof of rigidity above $\mb 0''$:] The construction of Lachlan-Lebeuf
			with $e$-total trees and relativization tells us that every $\mb x$-presentable busl is an
			initial segment $[\mb 0, \mb g]$ of the degrees below $\mb x$, for any $\mb x$ above $\mb 0''$, with $\mb g''\leq\mb x$.
			On the other hand, any such initial segment must be $\Sigma_1(\mb x)$-presentable, and if
			$\mb y$ is not below $\mb x$ then one can construct an usl which is $\mb y$-presentable but not
			$\Sigma_1(x)$-presentable. Hence $[\mb 0, \mb x]$ = the set of Turing degrees $\mb z$ such that
			every $\mb z$-presentable busl is an initial segment $[\mb 0, \mb g]$ below $\mb x$ with $\mb g''\leq\mb x$ and the theorem
			follows.
		\end{proof}

		Now as mentioned the result on automorphisms has been proved twice before,
		and two more proofs were presented above. The question is whether any of the
		available proofs shed light on the open problem:
		does there exist an automorphism $\pi$ of $\mc D$ and a degree $\mb x\geq\mb 0'$ such that $\pi\mb x \ne \mb x$?
		In other words, the
		problem asks if $\mb 0''$ can be replaced by $\mb 0'$.
		To pursue this problem using constructions of ideals $[\mb 0, \mb g]$, one would probably need to look at ideals of $[\mb 0, \mb 0']$
		and, by relativization $[\mb d, \mb d']$ for $\mb d\in\mc D$. A first result in that direction is the
		following.
		\begin{thm}\label{thm: 2.3}
			If $\mb x\geq\mb 0''$ or $\mb x = \mb 0'$ and $\mc L$ is a $\Sigma_2(\mb x)$-presentable usl then
			there exists $\mb g<\mb x$ such that $\mc L\cong [\mb 0, \mb g]$.
		\end{thm}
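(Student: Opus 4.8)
The plan is to reduce the statement to the known characterizations $I_{\mathbf x}=S_{\mathbf x}$ (for $\mathbf x\ge\mathbf 0''$) recorded above, and to prove the missing case $\mathbf x=\mathbf 0'$ directly by a priority construction relativized to $\mathbf 0'$. First I would dispose of the case $\mathbf x\ge\mathbf 0''$: a $\Sigma_2(\mathbf x)$-presentable usl is $\Sigma_1(\mathbf x')$-presentable, and since $\mathbf x'\ge\mathbf 0''$ as well, the cited result $S_{\mathbf x'}=I_{\mathbf x'}$ of \cite{r17} gives $\mathcal L\cong[\mathbf 0,\mathbf g]$ with $\mathbf g''\le\mathbf x'$; but then $\mathbf g'\le\mathbf x$, so in particular $\mathbf g<\mathbf x$ (strictness follows since $\mathbf g'\le\mathbf x$ forces $\mathbf g<\mathbf g'\le\mathbf x$, using that $\mathbf g\ne\mathbf g'$ always). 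Actually one wants $\mathbf g<\mathbf x$, and $\mathbf g\le\mathbf g'\le\mathbf x$ together with $\mathbf g\ne\mathbf g'$ gives this immediately. So the case $\mathbf x\ge\mathbf 0''$ is essentially bookkeeping on top of \cite{r17}.

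The substantive case is $\mathbf x=\mathbf 0'$, where $\mathcal L$ is $\Sigma_2(\mathbf 0')=\Sigma_3$-presentable, i.e.\ $\Sigma_1(\mathbf 0'')$-presentable. Here I would run the initial-segment construction outlined in the four-item list following the Third proof — a Spector-style splitting-tree construction producing $g$ and uniformly a family $\{g_k\}_{k\in L}$ with $[\mathbf 0,\mathbf g]=\{\mathbf g_k\}$ and $L\models k\le m\iff g_k\le_T g_m$ — but with the trees chosen so as to keep $g''\le\mathbf 0''$, equivalently $\mathbf g'\le\mathbf 0''$, equivalently... no: the goal now is only $\mathbf g<\mathbf 0'$, which is strictly weaker than $\mathbf g''\le\mathbf 0'$ and does not even require $\mathbf g'\le\mathbf 0'$. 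The point is that to decide totality of $\{e\}^g$ (item 3) and to guide the $e$-total trees we are allowed an oracle for $\mathbf 0''$, since the presentation is only $\Sigma_1(\mathbf 0'')$; nonetheless a careful choice of the sequence of subtrees — a forcing-with-trees argument where the generic is built below $\mathbf 0'$ rather than below the oracle used to define the trees — yields $g\le_T\mathbf 0'$, in fact $g<_T\mathbf 0'$ by a simple diagonalization ensuring $\mathbf 0'\not\le_T g$. This is the analogue, one jump up, of the familiar fact that there are initial segments realized strictly below $\mathbf 0'$ even though deciding the relevant totality predicates needs $\mathbf 0''$.

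The four requirements are handled as in the sketch: splitting trees give requirement 1 (every $\{e\}^g$ total is $\equiv_T$ some $g_k$), diagonalization on the trees gives requirement 2 ($k\not\le m\Rightarrow g_k\not\le_T g_m$), $e$-total trees give requirement 3, and the Pudlák-style usl representation by equivalence relations (as used for the $S_{\mathbf x}\subseteq I_{\mathbf x}$ direction) gives requirement 4 ($k\le m\Rightarrow g_k\le_T g_m$). The main obstacle is reconciling two competing demands: the construction must consult $\mathbf 0''$ (to enumerate the $\Sigma_1(\mathbf 0'')$ presentation and to locate $e$-total subtrees), yet the final generic $g$ must lie below $\mathbf 0'$. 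The resolution is that the sequence of conditions (trees) can be made $\mathbf 0'$-computable even though the \emph{reason} each condition is chosen — e.g.\ that a certain subtree is $e$-total — is only verifiable from $\mathbf 0''$: one argues that at each stage a suitable next condition \emph{exists} and is found by a $\mathbf 0'$-search whose correctness is guaranteed non-uniformly by the $\mathbf 0''$-facts, a standard but delicate "true-stage / $\Pi_1$-approximation" maneuver. Verifying that this maneuver is compatible with simultaneously meeting requirements 1–4 — in particular that the subtree operations demanded by the splitting and diagonalization strategies preserve whatever $\mathbf 0'$-effectivity the approximation provides — is where the real work of the proof lies.
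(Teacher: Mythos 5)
There is a genuine gap in your treatment of the case $\mb x\ge\mb 0''$. You apply the result $S_{\mb x'}=I_{\mb x'}$ of \cite{r17} to the $\Sigma_1(\mb x')$-presentable usl $\mc L$ to obtain $\mb g$ with $[\mb 0,\mb g]\cong\mc L$ and $\mb g''\le\mb x'$, and then infer ``but then $\mb g'\le\mb x$.'' That step is the jump-reflection fallacy: $\mb a'\le\mb b'$ does not imply $\mb a\le\mb b$. (For instance, any $1$-generic $\mb g$ with $\mb 0<\mb g<\mb 0'$ satisfies $\mb g'=\mb g\cup\mb 0'=\mb 0'$, so $\mb g'\le\mb 0'$ while $\mb g\not\le\mb 0$.) From $\mb g''\le\mb x'$ you get $\mb g<\mb x'$, but the theorem needs the strictly stronger $\mb g<\mb x$, and the $\mb g$ handed to you by \cite{r17} applied at $\mb x'$ has no reason to lie below $\mb x$. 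There is no way to patch this by ``choosing a better $\mb g$'' from that theorem, because the theorem gives no control on $\mb g$ beyond $\mb g''\le\mb x'$.

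The paper's proof avoids the issue by not passing through $\mb x'$ at all: it runs the Lachlan--Lebeuf construction (which already yields $\mb g<\mb x$ for $\mb x$-presentable $\mc L$), then uses a finite-injury argument (Lerman's Exercise VIII.1.16) to allow an $\mb x'$-presentation of $\mc L$ while still building $g$ with an $\mb x$-oracle --- the key observation being that the finitely many restarts of a tree $T_{n+1}$ do not injure the part of $g$ built so far, since the root of $T_{n+1}$ may be placed anywhere on $T_n$ --- and finally replaces lattice tables by Pudl\'ak representations to push from $\mb x'$-presentable to $\Sigma^0_1(\mb x')$-presentable, i.e.\ $\Sigma^0_2(\mb x)$-presentable. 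The conclusion $\mb g<\mb x$ is thus preserved at every step because $g$ never leaves the world recursive in $\mb x$. Your sketch of the case $\mb x=\mb 0'$ is in the right spirit (a $\mb 0'$-effective forcing with trees guided by $\Pi^0_1$-approximations to $\mb 0''$-facts), and is roughly consonant with the paper's appeal to Lerman's Chapter XII combined with Pudl\'ak representations, but it is too vague to assess; in any case it cannot rescue the proposal while the $\mb x\ge\mb 0''$ reduction rests on the invalid jump reflection.
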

		\begin{proof}[Proof sketch.]
			First suppose $\mb x\geq\mb 0''$. The argument of Lachlan and Lebeuf
			\cite{r9} shows that the theorem holds with $\mb x$-presentable in place of $\Sigma^0_2(\mb x)$-presentable.
			Using a finite injury argument as in \cite[Exercise VIII.1.16]{r11} this result is strengthened to $\mb x'$-presentable. The point is that if the injury is finite then the construction of any tree in a chain $T_0\supseteq T_1\supseteq\dots$ can start over again finitely often, but
			keeping the part of $g$ built so far, so there is no injury to $g$. This can be done
			because it does not matter where on $T_n$ the root of $T_{n+1}$ is put. Finally, the
			use of Pudl\'ak representations, discussed in the next section, allows the strengthening from $\mb x'$-presentable to $\Sigma^0_1(\mb x')$-presentable.
			If $\mb x = \mb 0'$ then we borrow the
			techniques from the construction of Lerman \cite[Chapter XII]{r11} and again use
			Pudl\'ak representations to deal with the high complexity of the usl.
		\end{proof}
		\begin{df}\label{df: 2.4}
			For $n\geq 1$ we say that $\mb g$ is generalized low$_n$, or GL$_n$, if
			$\mb g^{(n)} = (\mb g \cup \mb 0')^{(n-1)}$.
		\end{df}
		\begin{cor}\label{cor: 2.5}
			If $\mb x \geq\mb 0''$ or $\mb x = \mb 0'$ then the following are equivalent for any
			countable busl $L$ which happens to be a lattice:
			\begin{enumerate}
				\item[(i)] $L$ is $\Sigma^0_2(\mb x)$-presentable.
				\item[(ii)] $L$ is isomorphic to $[\mb 0, \mb g]$ for some $\mb g<\mb x$.
			\end{enumerate}
			In particular, the following are equivalent:
			\begin{enumerate}
				\item $L$ is a $\Sigma_3$-presentable bounded lattice.
				\item There exists $\mb g<\mb 0'$ such that $L\cong [\mb 0, \mb g]$ and $[\mb 0, \mb g]$ is a lattice.
			\end{enumerate}
		\end{cor}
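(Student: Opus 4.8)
The plan is to read the main equivalence (i)$\iff$(ii) directly off Lemma~\ref{lem: 2.2} and Theorem~\ref{thm: 2.3}, and then observe that the displayed consequence is just the instance $\mb x=\mb 0'$: there $\Sigma^0_2(\mb 0')$-presentability coincides with $\Sigma_3$-presentability, since $\Sigma^0_2(\mb a)$-presentable means $\Sigma^0_1(\mb a')$-presentable and $(\mb 0')'=\mb 0''=\mb 0^{(2)}$, while a countable busl that happens to be a lattice is precisely a countable bounded lattice, the lattice property of $[\mb 0,\mb g]$ being made explicit in clause~(2).

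For (i)$\Rightarrow$(ii): if $L$ is $\Sigma^0_2(\mb x)$-presentable then in particular it is a $\Sigma^0_2(\mb x)$-presentable usl, so Theorem~\ref{thm: 2.3} (whose hypothesis $\mb x\geq\mb 0''$ or $\mb x=\mb 0'$ is exactly ours) yields $\mb g<\mb x$ with $L\cong[\mb 0,\mb g]$. Since $[\mb 0,\mb g]$ is an initial segment, greatest lower bounds computed inside it coincide with those in $\mc D$ whenever they exist, so an order isomorphism $L\cong[\mb 0,\mb g]$ transports the lattice structure of $L$; hence $[\mb 0,\mb g]$ is a lattice.

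For (ii)$\Rightarrow$(i): suppose $L\cong[\mb 0,\mb g]$ with $\mb g<\mb x$ and $[\mb 0,\mb g]$ a lattice. Lemma~\ref{lem: 2.2} with $\mb a=\mb 0$ and $\mb b=\mb g$ gives that $[\mb 0,\mb g]$ is $\Sigma^0_3(\mb g)$-presentable, that is $\Sigma^0_1(\mb g'')$-presentable, so it is enough to show $\mb g''\leq\mb x'$: then every $\Sigma^0_1(\mb g'')$-presentable structure is $\Sigma^0_1(\mb x')$-presentable, i.e.\ $\Sigma^0_2(\mb x)$-presentable. This is the one place the lattice hypothesis is needed, and I would use it to show that if $[\mb 0,\mb g]$ is a lattice then $\mb g$ is $\mathrm{GL}_2$, i.e.\ $\mb g''=(\mb g\cup\mb 0')'$ in the sense of Definition~\ref{df: 2.4}. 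Granting this, the estimate is immediate: if $\mb x=\mb 0'$ then $\mb g<\mb 0'$ forces $\mb g\cup\mb 0'=\mb 0'$, so $\mb g''=\mb 0''=(\mb 0')'=\mb x'$; and if $\mb x\geq\mb 0''$ then $\mb 0'\leq\mb 0''\leq\mb x$ together with $\mb g<\mb x$ gives $\mb g\cup\mb 0'\leq\mb x$, so $\mb g''=(\mb g\cup\mb 0')'\leq\mb x'$. In either case $[\mb 0,\mb g]\cong L$ is $\Sigma^0_2(\mb x)$-presentable, as required.

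The main obstacle is therefore the implication that a lattice interval $[\mb 0,\mb g]$ forces $\mb g$ to be $\mathrm{GL}_2$, which I would prove in contrapositive form. A degree $\mb g$ that is not $\mathrm{GL}_2$ has enough recursion-theoretic room to carry out, below $\mb g$, a relativized Spector exact-pair construction steered by the failure of $\mathrm{GL}_2$, in the spirit of the Jockusch--Posner analysis of non-$\mathrm{GL}_2$ degrees: one obtains a strictly increasing sequence $\mb b_0<\mb b_1<\cdots<\mb g$ and degrees $\mb c_0,\mb c_1\leq\mb g$ forming an exact pair for the ideal generated by the $\mb b_i$. That ideal has no largest element, so $\mb c_0$ and $\mb c_1$ have no greatest lower bound in $\mc D$, and since $[\mb 0,\mb g]$ is downward closed they have none there either; hence $[\mb 0,\mb g]$ is not a lattice. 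I would either cite this from the literature on jump classes and exact pairs or supply the (brief) construction; the remaining steps above are routine, given Lemma~\ref{lem: 2.2} and Theorem~\ref{thm: 2.3}.
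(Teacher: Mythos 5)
Your proposal is correct and takes essentially the same approach as the paper: the key step in both is the Jockusch--Posner result that if $[\mb 0,\mb g]$ is a lattice then $\mb g$ is $\mathrm{GL}_2$, from which the $\Sigma^0_2(\mb x)$-presentability follows by routine jump arithmetic exactly as you compute (the paper compresses this to $\Sigma_2(\mb g\cup\mb 0')$-presentable, equivalent to your $\Sigma^0_1(\mb g'')$ via $\mathrm{GL}_2$). The paper simply cites Jockusch--Posner rather than sketching the exact-pair construction as you do, and leaves the (i)$\Rightarrow$(ii) direction implicit via Theorem~\ref{thm: 2.3}.
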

		\begin{proof}
			By Jockusch and Posner \cite{r6} if $[\mb 0, \mb g]$ is a lattice then $\mb g$ is GL$_2$, $\mb g'' = (\mb g \cup \mb 0 ')'$, and so $[\mb 0, \mb g]$ is $\Sigma_2(\mb g \cup \mb 0')$-presentable, and since both $\mb g$ and $\mb 0'$ are
			below $\mb x$, also $\Sigma_2(\mb x)$-presentable.
		\end{proof}
		The first partial results toward Corollary 2.5 were obtained by Spector \cite{r22}
		and Sacks \cite{r18} who found minimal degrees below $\mb 0''$ and $\mb 0'$, respectively. Next,
		it is natural to consider initial segments below arbitrary nonzero r.e. degrees\ or
		low initial segments.
		\begin{thm}\label{thm: 2.6}
			If $\mb a\in\mc D$ and $L$ is a $\Sigma^0_2(\mb a)$-presentable busl then $L\cong [\mb a, \mb g]$ for some $\mb g$ satisfying $\mb g' = \mb a'$.
		\end{thm}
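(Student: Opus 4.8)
The plan is to run the Lachlan--Lebeuf initial segment construction \cite{r9}, in the $\Sigma^0_2$ form of \cite[Chapter XII]{r11}, relativized throughout to $\mb a$, with two refinements: Pudl\'ak representations \cite{r16} to absorb the $\Sigma^0_2(\mb a)$ (equivalently $\Sigma^0_1(\mb a')$) complexity of $L$, exactly as in the proof of Theorem \ref{thm: 2.3}; and $e$-total trees organised by a finite-injury argument recursive in $\mb a'$, which will deliver $\mb g'\le\mb a'$. Fix $A\in\mb a$. Since $L$ is $\Sigma^0_1(\mb a')$-presentable we are given $\mc L' = (\omega,\le',\lor')$ presenting $L$ with $\le'$ r.e.\ in $A'$ and $\lor'$ recursive in $A'$; let $k_0$ and $z$ be the least and greatest elements of the busl $L$. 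We build a set $G\ge_T A$ lying on a descending sequence of $A$-recursive trees $T_0\supseteq T_1\supseteq\cdots$ together with functions $g_k\le_T G$ for $k\in L$, with $g_{k_0}\equiv_T A$, such that $[\mb a,\mb g] = \{g_k \mid k\in L\}$ where $\mb g$ is the degree of $g_z$, and such that $L\models k\le m$ iff $g_k\le_T g_m$; since $z$ bounds $L$, $[\mb a,\mb g]$ is exactly the image of $L$ and $\mb a\le\mb g$.

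The requirements are those in the discussion of the inclusion $S_{\mb x}\subseteq I_{\mb x}$ above --- totality coding, diagonalisation, decidability of totality, order-preservation --- relativized to $A$. Requirement (1), that every total $\{e\}^G$ is $\equiv_T g_k$ for some $k$, is met by Spector's splitting-tree technique applied to trees recursive in $A$; requirement (2), $k\not\le m\Rightarrow g_k\not\le_T g_m$, by the usual diagonalisation; requirement (4), $k\le m\Rightarrow g_k\le_T g_m$, by representing $L$ as a system of equivalence relations extracted from a Pudl\'ak representation, the point being --- as in Theorem \ref{thm: 2.3} --- that such a representation can be approximated $\Sigma^0_1(A')$-uniformly, which the representations of \cite{r9} cannot. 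Requirement (3) here is that totality of $\{e\}^G$ be decidable uniformly in $e$ recursively in $A'$; it is secured by passing to $e$-total subtrees at the appropriate stages. Each $T_{n+1}$ is built from $T_n$ by a procedure recursive in $A'$; because the $\Sigma^0_1(A')$ facts about $L$ that $T_{n+1}$ uses are enumerated only gradually, $T_{n+1}$ may be rebuilt finitely often, but --- as noted in the proof of Theorem \ref{thm: 2.3} --- this does not injure $G$: one simply re-roots the rebuilt $T_{n+1}$ at a node of $T_n$ extending the initial segment of $G$ already committed, which is harmless since it is immaterial where on $T_n$ the root of $T_{n+1}$ lies.

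The equation $\mb g' = \mb a'$ follows in the usual way. The inequality $\mb a'\le\mb g'$ is immediate from $\mb a\le\mb g$. For $\mb g'\le\mb a'$ one verifies that the construction, including the resolution of every injury, is recursive in $A'$ and that $A'$ can recover the true outcome of each requirement: for a totality requirement the $e$-total tree machinery lets $A'$ decide whether $\{e\}^G$ is total and, when it is, find a node of $G$ beyond which each value $\{e\}^G(n)$ is fixed, so that $A'$ computes $G'$. This is exactly requirement (3) cast for single-jump control, and it is the relativized finite-injury counterpart of the argument that yields low initial segments below $\mb 0'$.

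The principal obstacle is coordinating these two pressures. On one side $L$ is only $\Sigma^0_2(\mb a)$-presentable, so its structure is visible only through an $A'$-enumeration that changes its mind; on the other side lowness forces every genuine decision of the construction to be $A'$-recursive and the total injury along each branch of the priority tree to be $A'$-bounded. In Theorem \ref{thm: 2.3} with $\mb x\ge\mb 0''$ one enjoys double-jump slack ($\mb g''\le\mb x$) and may query the presentation and the totality predicate at the higher level; here every question the construction asks --- totality of functionals, which nodes of a Pudl\'ak table to extend into, whether a diagonalisation witness has appeared, where to re-root a rebuilt tree --- must be $\Sigma^0_1(A')$ or outright $A'$-decidable, with bookkeeping transparent enough that $A'$ identifies the true path. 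Merging the Chapter XII techniques of \cite{r11} with the Pudl\'ak representation under this single-jump discipline is the heart of the matter; once that is done, the standard ``the true path is oracle-recursive and the jump follows it'' argument gives $\mb g' = \mb a'$, and the passage to an arbitrary base degree $\mb a$ is then routine relativization.
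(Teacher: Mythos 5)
Your proof sketch takes essentially the same route as the paper's, which simply cites Lerman's Corollary XII.5.10 (giving the result for $\mb a'$-presentable busls with $\mb g' = \mb a'$) and observes that Pudl\'ak representations erase the difference between $\mb a'$-presentable and $\Sigma^0_1(\mb a')$-presentable. What you have written is a faithful unpacking of the construction behind that citation --- the relativized Lerman Chapter~XII machinery with $e$-total trees, finite injury resolved by re-rooting, and the Pudl\'ak representation's $\Sigma^0_1(A')$-approximability correctly identified as the new ingredient --- so the two proofs agree in substance.
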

		\begin{proof}[Proof sketch]
			Lerman (\cite{r11} Corollary XII.5.10) obtains the analogous result for L being $\mb a'$-presentable rather than $\Sigma^0_1(\mb a')$-presentable,
			but using Pudl\'ak representations there is no essential distinction.
		\end{proof}
		\begin{con}[Lowness conjecture]\label{con: 2.7}
			If $\mb a\in\mc D$, $\mb x\geq\mb a''$ or $\mb x = \mb a'$ and
			$\mb g\leq\mb x$, if $[\mb a, \mb g]$ is a lattice
			and $\mb g\in$ GL$_1(\mb a)$, i.e., $\mb g' = \mb g\cup\mb a'$, then $[\mb a, \mb g]$ is $\Sigma^0_1(\mb x)$-presentable.
		\end{con}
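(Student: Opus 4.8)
\begin{rem}
One possible route toward the Lowness conjecture is as follows.

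\emph{Reductions.} Relativizing to a set in $\mb a$ --- a routine step, since every notion involved relativizes and $A\le_T X$ for $A\in\mb a$, $X\in\mb x$ --- we may assume $\mb a=\mb 0$. Then $\mb g'=\mb g\cup\mb 0'\le\mb x$: if $\mb x\ge\mb 0''$ then $\mb g\cup\mb 0'\le\mb x\cup\mb 0'=\mb x$, while if $\mb x=\mb 0'$ then $\mb g\le\mb 0'$ forces $\mb g\cup\mb 0'=\mb 0'=\mb x$, so that $\mb g$ is in fact low. Since a relation r.e.\ in $Y$ is r.e.\ in $Z$ whenever $Y\le_T Z$, the same presentation shows that a $\Sigma^0_1(\mb y)$-presentable usl is $\Sigma^0_1(\mb z)$-presentable whenever $\mb y\le\mb z$; hence it suffices to show that $[\mb 0,\mb g]$ is $\Sigma^0_1(\mb g')$-presentable. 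Now Lemma \ref{lem: 2.2} already gives that $[\mb 0,\mb g]$ is $\Sigma^0_1(\mb g'')$-presentable, i.e., $\Sigma^0_2(\mb g')$-presentable since $\mb g''=(\mb g')'$ --- and for this neither the lattice hypothesis nor any lowness assumption is needed. GL$_1$ then gives $\mb g'\le\mb x$, hence $\Sigma^0_2(\mb x)$-presentability, the bound already recorded in Corollary \ref{cor: 2.5}. So the entire remaining content of the conjecture is to improve this by one level, from $\Sigma^0_2(\mb g')$ to $\Sigma^0_1(\mb g')$, and it is the lattice hypothesis --- so far unused --- that must carry the improvement.

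\emph{Why the obvious presentation does not suffice.} The presentation coming out of Lemma \ref{lem: 2.2} has Turing reducibility between total $G$-computable functionals as its order, and the assertion $\{i\}^G\le_T\{j\}^G$ says that some functional with oracle $\{j\}^G$ is total and computes $\{i\}^G$ --- a genuinely $\Sigma^0_3$-level condition in $\mb g$, i.e., $\Sigma^0_2(\mb g')$ and not $\Sigma^0_1(\mb g')$. A $\Sigma^0_1(\mb g')$-presentation therefore cannot be assembled from the degrees themselves; it must be an abstract combinatorial presentation of the lattice $[\mb 0,\mb g]$ whose order is simple in the presenting data. This is precisely where the lattice hypothesis must enter --- as it does, one level higher, in Corollary \ref{cor: 2.5} --- and where Pudl\'ak representations \cite{r16}, which in the forward constructions of Theorems \ref{thm: 2.3} and \ref{thm: 2.6} bridge exactly the gap between ``$\mb x'$-presentable'' and ``$\Sigma^0_1(\mb x')$-presentable'', ought to be brought in.

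\emph{The plan.} The aim would be to show that $[\mb 0,\mb g]$ admits a Pudl\'ak-style lattice table $\Theta$ recursive in $G'$ --- equivalently, that $G'$ can build such a $\Theta$ by a construction mirroring the Lachlan--Lebeuf \cite{r9} and Lerman \cite{r11} initial-segment constructions run in reverse --- together with a $G'$-recursive list of total $G$-computable functionals realizing its columns; one would then read a $\Sigma^0_1(\mb g')$-presentation of $[\mb 0,\mb g]$ off $\Theta$ in the usual way. Because $[\mb 0,\mb g]$ is a \emph{lattice}, one need only realize a spanning family of columns, and the hope is that the combinatorics of $\Theta$ --- meet and join identities, density conditions --- let $G'$ certify and extend $\Theta$ faithfully without ever having to settle a full $\le_T$ fact outright.

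\emph{The main obstacle.} That hope is the crux of the matter. Totality of a $G$-functional is $\Pi^0_2(\mb g)$ and Turing reducibility between total functionals is $\Sigma^0_2(\mb g')$, so $G'$ can neither list the representatives nor compute their order, and the whole difficulty is to show that the lattice structure together with GL$_1$ nevertheless lets a single r.e.-in-$\mb g'$ process pin down a faithful $\Theta$. This plainly needs a new idea --- presumably a careful interleaving with the Spector splitting / $e$-total-tree analysis of $G$, choosing representatives that are mutually ``split'' in a manner $G$ itself makes manifest, together with a $\mb g'$-effective way of computing meets in $[\mb 0,\mb g]$. The natural first case is $\mb x=\mb 0'$: there GL$_1$ forces $\mb g$, and hence every $\mb d\le\mb g$, to be low, and the statement becomes the converse, for lattices, of Theorem \ref{thm: 2.6} --- arguably the real test of the whole circle of ideas, and of direct bearing on the problem of rigidity above $\mb 0'$ raised above. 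It is moreover conceivable that the natural hypothesis is that every $\mb d\le\mb g$ --- not merely $\mb g$ itself --- lies in GL$_1(\mb a)$.
\end{rem}
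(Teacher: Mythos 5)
This statement is a \emph{conjecture}: the paper provides no proof of it (indeed its truth is left open, and Lemma~\ref{lem: 2.8} spells out why it would matter). You correctly treat it as such, offering a structural analysis and a plan of attack rather than claiming a proof, so there is no proof in the paper to compare against. Your analysis is accurate and consistent with the paper's own development: the reduction to $\mb a=\mb 0$ is standard relativization; the observation that GL$_1$ alone (via GL$_2$ and Jockusch--Posner) already yields $\Sigma^0_2(\mb x)$-presentability without the lattice hypothesis is right, and it correctly isolates the content of the conjecture as the one-level improvement from $\Sigma^0_2(\mb g')$ to $\Sigma^0_1(\mb g')$; your diagnosis of why the direct presentation of Lemma~\ref{lem: 2.2} cannot give this (Turing reducibility between total $G$-functionals is intrinsically $\Sigma^0_2(\mb g')$) is correct; and the appeal to Pudl\'ak representations as the likely tool matches the paper's methodology in Theorems~\ref{thm: 2.3} and~\ref{thm: 2.6}. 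You also flag, honestly, the genuine obstacle --- that $\mb g'$ cannot even decide totality or the $\le_T$ facts it would seem to need --- and you note a plausible strengthening of the hypothesis (GL$_1$ for every $\mb d\le\mb g$); the paper itself offers a different strengthening (no $1$-generic relative to $\mb a$ below $\mb g$), and supports the partial case of HIF degrees in Lemma~\ref{lem: 2.9}, which would have been worth mentioning as evidence. In short: your remark is a faithful and sensible reading of an open problem, not a proof, and that is the appropriate stance here.
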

		A stronger version of the conjecture:
			If $\mb a\in\mc D$, $\mb x\geq\mb a''$ or $\mb x = \mb a'$ and
			$\mb g\leq\mb x$, if $[\mb a, \mb g]$ contains no 1-generic relative to $\mb a$
			and $\mb g\in$ GL$_1(\mb a)$, i.e., $\mb g = \mb g \cup\mb a'$, then $[\mb a, \mb g]$ is $\Sigma^0_1(\mb x)$-presentable.

		\begin{lem}\label{lem: 2.8}
			If Conjecture \ref{con: 2.7} is true then every automorphism of $\mc D$ is the
			identity above $\mb 0'$.
		\end{lem}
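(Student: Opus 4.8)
Let $\pi$ be an automorphism of $\mc D$; the goal is to show $\pi\mb x = \mb x$ for every $\mb x\geq\mb 0'$. Since the jump is definable in $(\mc D,\cup)$ \cite{r20}, $\pi$ commutes with the jump, and in particular $\pi\mb 0' = \mb 0'$. The plan is to attach to each degree $\mb c$ an automorphism-invariant set of isomorphism types of initial segments which, granting Conjecture \ref{con: 2.7}, coincides with the set of $\Sigma^0_1(\mb c')$-presentable lattice busls and therefore pins down $\mb c'$; one then reaches every $\mb x\geq\mb 0'$ as some such $\mb c'$ by Friedberg's jump inversion theorem.

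For a degree $\mb c$, let
\[
\mc B(\mb c) \;=\; \{\,\text{usl isomorphism types of }[\mb c,\mb h]\ :\ \mb c\leq\mb h,\ \mb h' = \mb c',\ [\mb c,\mb h]\text{ is a lattice}\,\}.
\]
Each interval $[\mb c,\mb h]$ here is countable (by the countable predecessor property of $\mc D$) and is a bounded unital semilattice, with least element $\mb c$ and greatest element $\mb h$, which happens to be a lattice. Since $\pi$ is an order-isomorphism commuting with join, it restricts to an isomorphism of $[\mb c,\mb h]$ onto $[\pi\mb c,\pi\mb h]$, and it preserves the conditions $\mb h' = \mb c'$ (because $\pi$ commutes with the jump) and ``$[\mb c,\mb h]$ is a lattice''; therefore $\mc B(\mb c) = \mc B(\pi\mb c)$ for every $\mb c$.

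The first step is to identify $\mc B(\mb c)$ with the set of $\Sigma^0_1(\mb c')$-presentable busls that happen to be lattices. For one inclusion, one applies Conjecture \ref{con: 2.7} with $\mb a = \mb c$ and $\mb x = \mb c'$ (the case ``$\mb x = \mb a'$''): if $\mb h' = \mb c'$ then $\mb h\leq\mb c'$ and $\mb h\in\mathrm{GL}_1(\mb c)$, and $[\mb c,\mb h]$ is a lattice by the definition of $\mc B(\mb c)$, so the conjecture gives that $[\mb c,\mb h]$ is $\Sigma^0_1(\mb c')$-presentable. For the other inclusion, $\Sigma^0_1(\mb c')$-presentable is by definition the same as $\Sigma^0_2(\mb c)$-presentable, so Theorem \ref{thm: 2.6} (with $\mb a = \mb c$) realizes any $\Sigma^0_1(\mb c')$-presentable lattice busl $L$ as $[\mb c,\mb h]$ with $\mb h' = \mb c'$, and $[\mb c,\mb h]\cong L$ is then a lattice. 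The second step is to show that $\mc B(\mb c)$ determines $\mb c'$: if $\mb c_1'\not\leq\mb c_2'$, one would construct a $\Sigma^0_1(\mb c_1')$-presentable lattice busl that is not $\Sigma^0_1(\mb c_2')$-presentable, whence $\mc B(\mb c_1)\ne\mc B(\mb c_2)$; since $\Sigma^0_1(\,\cdot\,)$-presentability is monotone in the oracle, it follows that $\mc B(\mb c_1) = \mc B(\mb c_2)$ implies $\mb c_1' = \mb c_2'$.

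Granting the two steps, the lemma follows at once: given $\mb x\geq\mb 0'$, choose by Friedberg jump inversion a degree $\mb c$ with $\mb c' = \mb x$; then $\mc B(\mb c) = \mc B(\pi\mb c)$ gives $\mb c' = (\pi\mb c)' = \pi(\mb c') = \pi\mb x$, that is, $\pi\mb x = \mb x$. Step 1 is routine given Conjecture \ref{con: 2.7}, Theorem \ref{thm: 2.6}, and the definitional equality $\Sigma^0_1(\mb c') = \Sigma^0_2(\mb c)$. The main obstacle is Step 2: the exact-pair and independence coding underlying the Slaman--Woodin sets of \cite{r15} is naturally only an upper semilattice, so one must carry out such a coding inside a bounded lattice (presumably by means of the Pudl\'ak representations already used in the constructions above) in such a way that the least presentation complexity of the coding structure is exactly that of the coded degree.
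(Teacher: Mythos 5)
Your proof is correct and takes essentially the same route as the paper's: apply Friedberg jump inversion to write $\mb x = \mb c'$, consider the isomorphism types of intervals $[\mb c,\mb h]$ with $\mb h'=\mb c'$ that are lattices, identify these (via Theorem~\ref{thm: 2.6} and Conjecture~\ref{con: 2.7}) with the $\Sigma^0_1(\mb c')$-presentable lattice busls, and conclude from automorphism-invariance of this class that $\pi\mb x=\mb x$. You are in fact a bit more careful than the paper's two-sentence argument: you correctly insist throughout on restricting to busls \emph{that are lattices} (the paper drops this qualifier when saying ``these are exactly the $\Sigma^0_1(x)$-presentable busls''), and you explicitly flag that the step ``this class determines $\mb x$ uniquely'' requires a coding argument inside bounded lattices rather than just usls --- a point the paper asserts without proof, and where the restriction to lattices does create genuine additional work.
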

		\begin{proof}
			Let $\mb x\geq\mb 0'$ and by the Friedberg jump inversion theorem let $\mb a$ be such
			that $\mb x = \mb a'$. Consider the usl isomorphism types of intervals of the form $[\mb a, \mb g]$
			with $\mb g' = \mb a'$ that are lattices. By Theorem \ref{thm: 2.6} and Conjecture \ref{con: 2.7}, these are
			exactly the $\Sigma^0_1(x)$-presentable busls, and this class determines $\mb x$ uniquely.
		\end{proof}

		The usual constructions of initial segments below $\mb 0''$ using forcing with recursive perfect trees yield degrees $\mb g$ that are hyperimmune-free (HIF).
		This means that every function recursive in $\mb g$ is dominated by a recursive function.
		In general, $\mb g$ is HIF relative to $\mb a$ if every function recursive in $\mb g\cup\mb a$ is dominated by
		a function recursive in $\mb a$.

		\begin{lem}\label{lem: 2.9}
			Conjecture \ref{con: 2.7} is true for any g which is HIF relative to a.
		\end{lem}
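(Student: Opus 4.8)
The plan is to deduce the lemma from Lemma \ref{lem: 2.2}. By that lemma $[\mb a,\mb g]$ is $\Sigma^0_3(\mb g)$-presentable, i.e.\ $\Sigma^0_1(\mb g'')$-presentable; moreover the presentation $\mc L'$ produced there has $\leq'$ r.e.\ in $\mb g''$ and $\lor'$ recursive in $\mb g''$, so the \emph{same} $\mc L'$ witnesses $\Sigma^0_1(\mb x)$-presentability as soon as $\mb g''\le_T\mb x$. Thus the whole statement reduces to proving $\mb g''\le_T\mb x$ under the hypotheses of Conjecture \ref{con: 2.7} together with the assumption that $\mb g$ is HIF relative to $\mb a$.

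First dispose of the alternative $\mb x=\mb a'$. Then $\mb a\le\mb g\le\mb a'$, so $\mb g$ is $\Delta^0_2$ over $\mb a$; if $\mb g>\mb a$ then by the (relativized) Miller--Martin theorem --- no degree strictly between $\mb a$ and $\mb a'$ is hyperimmune-free relative to $\mb a$ --- $\mb g$ would fail to be HIF relative to $\mb a$, a contradiction. Hence $\mb g=\mb a$, and $[\mb a,\mb g]$ is a one-point usl, which is recursively presentable and a fortiori $\Sigma^0_1(\mb x)$-presentable. So from now on assume $\mb x\ge\mb a''$.

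The crux is the claim that \emph{if $\mb g$ is HIF relative to $\mb a$ and $\mb g\in\mathrm{GL}_1(\mb a)$, then $\mb g''=\mb g\cup\mb a''$}. Fix $A\in\mb a$ and $G\in\mb g$ with $A\le_T G$; I would prove $\mathrm{Tot}^G=\{e:\{e\}^G\text{ total}\}\le_T G\oplus A''$. This suffices, because $\mathrm{Tot}^G$ is $\Pi^0_2(G)$-complete and $G''$ is a $\Sigma^0_2(G)$ set, so $G''\le_m\overline{\mathrm{Tot}^G}\le_T G\oplus A''$, while $G\oplus A''\le_T G''$ is trivial; hence $G''\equiv_T G\oplus A''$. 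For the reduction of $\mathrm{Tot}^G$: on one side, ``$\{e\}^G$ is not total'' is $(\exists n)[\{e\}^G(n)\text{ diverges}]$, and ``$\{e\}^G(n)$ diverges'' is co-r.e.\ in $G$, hence decided by $G'$, hence (using $\mathrm{GL}_1(\mb a)$, which gives $G'\equiv_T G\oplus A'$) by $G\oplus A'$ and a fortiori by $G\oplus A''$; so $\mathrm{Tot}^G$ is co-r.e.\ in $G\oplus A''$. On the other side, because $\mb g$ is HIF relative to $\mb a$, ``$\{e\}^G$ is total'' is equivalent to $(\exists j)\,[\,\{j\}^A\text{ total}\ \wedge\ (\forall n)\,\{e\}^G(n)\text{ halts in at most }\{j\}^A(n)\text{ steps}\,]$: in the forward direction the step-counting function of $\{e\}^G$ is $G$-recursive, hence dominated by some $A$-recursive function, and the finitely many exceptional values are absorbed by replacing $j$ with an index for that function plus a constant. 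Now with oracle $A''$ one enumerates the indices $j$ with $\{j\}^A$ total, and for each such $j$ the remaining condition is $\Pi^0_1(G)$, decided by $G'\le_T G\oplus A''$; so $\mathrm{Tot}^G$ is also r.e.\ in $G\oplus A''$, hence recursive in it, proving the claim.

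Finally assemble: $\mb g''=\mb g\cup\mb a''\le\mb x$ since $\mb g\le\mb x$ and $\mb a''\le\mb x$, and then Lemma \ref{lem: 2.2} gives that $[\mb a,\mb g]$ is $\Sigma^0_1(\mb g'')$- hence $\Sigma^0_1(\mb x)$-presentable. The main obstacle is exactly this jump computation: the more obvious route --- exploiting that HIF relative to $\mb a$ makes Turing and truth-table reducibility below $\mb g$ coincide, and presenting $[\mb a,\mb g]$ via truth-table functionals with $\mb a$-recursive use bounds --- does not by itself lower the complexity of the presentation, because recognizing indices of \emph{total} $\mb a$-recursive use bounds still costs a jump; it is the combination with $\mathrm{GL}_1(\mb a)$ that drags $\mathrm{Tot}^G$ below $\mb x$. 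Note that the lattice hypothesis of Conjecture \ref{con: 2.7} is not used in this argument.
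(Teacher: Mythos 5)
Your argument is correct and follows essentially the same route as the paper: dispose of $\mb x=\mb a'$ via the Miller--Martin fact that no degree in $(\mb a,\mb a']$ is HIF over $\mb a$, then for $\mb x\ge\mb a''$ compute the double jump to get $\mb g''\le\mb x$ and invoke Lemma~\ref{lem: 2.2}. The only difference is presentational: the paper factors the jump computation as ``HIF gives $\mb g''=\mb g'\cup\mb a''$'' (stated as readily shown) followed by ``GL$_1(\mb a)$ gives $\mb g'=\mb g\cup\mb a'$,'' whereas you prove the merged statement $\mb g''=\mb g\cup\mb a''$ directly by the domination-plus-$\mathrm{Tot}^G$ argument, which is exactly the detail the paper leaves implicit.
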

		\begin{proof}
			It can be readily shown that HIF degrees $\mb g$ satisfy $\mb g'' = \mb g' \cup \mb 0''$. If a
			degree $\mb g$ below $\mb x \geq\mb 0''$ is both HIF and GL$_1$, then
			$\mb g'' = \mb g' \cup\mb 0'' = (\mb g\cup\mb 0')\cup\mb 0'' = \mb g\cup\mb 0'' \leq \mb x$, so
			$[\mb 0, \mb g]$ is $\Sigma^0_1(\mb x)$-presentable, and this argument relativizes to $\mb a\in\mc D$.
			And for any $\mb a\in\mc D$, there are no HIF degrees in the interval $(\mb a, \mb a']$ (see \cite{r14}), so
			the lemma is trivial for $\mb x = \mb a'$.
		\end{proof}

		On the other hand, there are HIF degrees $\mb g$ below $\mb x$ with $[\mb 0, \mb g]$ of any $\Sigma^0_2(\mb x)$-
		presentable isomorphism type; in fact these degrees can all be forced to be not
		GL$_1$.

		It follows from results of Shore \cite{r19} that if $\mb g$ is not GL$_2$ then the lemma is
		best possible, i.e. every presentation of $[\mb 0, \mb g]$ has degree at least $\mb g^{(3)}$.
		On the other hand initial segment constructions lead to GL$_2$ degrees $\mb g$, and such initial
		segments can have arbitrary prescribed isomorphism type, hence the interval
		$[\mb 0, \mb g]$ may even have a presentation of degree $\mb 0$. The fact that non-GL$_2$ initial
		segments $[\mb 0, \mb x]$ are complicated may be seen as evidence against Conjecture \ref{con: 2.7}.
		However Shore's proof uses explicitly the fact that $[\mb 0, \mb x]$ is not a lattice. Namely,
		the proof involves a coding of the set $X^{(3)}$, (where $X$ is a set of degree $\mb x$) by
		\[
			n \in X^{(3)}\leftrightarrow \mb g_n \leq \mb y_0 \,\&\, \mb g_n \leq \mb y_1
		\]
		for certain degrees $\mb g_0, \mb g_1, \dots, \mb y_0, \mb y_1< \mb x$, and the meet (greatest lower bound)
		$\mb y_0\cap \mb y_1$ can be shown to not exist.

	\section{An overview of the proofs}
		Considering the theorem that every $\Sigma_1(x)$-
		presentable busl is isomorphic to $[\mb 0, \mb g]$ for $\mb g<\mb x$ if $\mb x\geq \mb 0''$,
		one has the Turing
		degrees $\mc D$ on one hand, and a countable usl $L$ on the other, and it is desired
		to embed $L$ into $\mc D$ as an initial segment. The first step is to break $L$ up into
		countably many finite pieces. One way to do this would be to write $L = \cup_iL_i$
		where each $L_i$ is a finite busl substructure of $L$. However it is preferable to use
		a more general representation of $L$ as a direct limit. That is, find finite busl
		substructures $L_i$ of $L$ and busl homomorphisms $\varphi_i:L_i\rightarrow L_{i+1}$ such that $L$ is
		isomorphic to the disjoint union of the $L_i$ modulo an equivalence relation $\approx$ generated by the relations
		$a \approx\varphi_i(a)$ for each $i<\omega$ and each $a\in L_i$. The advantage
		of using homomorphisms rather than embeddings (injective homomorphisms) is
		that if $L$ is $\Sigma_1(\mb x)$-presentable then a suitable sequence of $L_i$ and $\varphi_i$ for
		$i<\omega$
		can be found recursively in $\mb x$.

		To each $L_i$ associate a countably infinite chain
		$\Theta_0(L_i)\subseteq\Theta_1(L_i)\subseteq\dots$ of
		finite sets, where both the sequence and its union are referred to as $\Theta(L_i)$ or
		just $\Theta$ when no confusion is likely; note that this gives a doubly infinite array
		$\Theta^i_j = \Theta_j(L_i)$, $i, j<\omega$. On each $\Theta$j there are finite functions
		$f_k = f^{i,j}_k : \Theta_j\rightarrow\Theta_j$
		for each $k\in L_i$, such that $f^{i,j+1}_k$ extends $f^{i,j}_k$ as a function.
		This is done in such
		a way that the associated equivalence relations given by
		$x\equiv_k y\leftrightarrow f_k(x) = f_k(y)$ have the property that $L \models k \leq m$ just in case $\equiv_k\supseteq\equiv_m$
		(considering an
		equivalence relation as a set of pairs).

		In the end a function $g:\omega\rightarrow\omega$ (with degree denoted in boldface, $\mb g$) is needed
		such that the interval $[\mb 0, \mb g]$ in the Turing degrees is equal to $\{\mb g_k \mid k\in L\}$, where
		$g_k(x) = f_k(g(x))$. The way the functions $f_k$ are set up, if $k\leq m$ in $L$ then
		$g_k(x) = f_k(g_m(x))$. Hence in this case $g_k\leq_T g_m$, in fact $g_k\leq_{tt} g_m$. Note
		however that the reduction procedure using $f_k$ is not a many-one reduction; this
		is no accident, as the many-one degrees form a distributive usl, and usls are not
		in general distributive.

		Some version of the splitting tree technique of Spector seems to be a necessary
		tool for constructing initial segments of the Turing degrees. Hence $g$ should lie
		on trees $T_0\supseteq T_1\supseteq\dots$. These trees will be \emph{trees for $L_i$} for various $i$, for example
		$T_{2i}$ and $T_{2i+1}$ can be trees for $L_i$ for each $i<\omega$. The domain of a tree for $L_i$
		will be
		\[
			S(\Theta(L_i)) = \{\sigma \in \omega^{<\omega} \mid \forall x < |\sigma| (\sigma(x) \in \Theta_x(L_i))\}
		\]
		and the range will be contained in $S(\Theta(L_0))$. In order to have $T_{i+1}$ be a subtree
		of $T_i$, $\Theta(L_{i+1})$ must be embedded into $\Theta(L_i)$ in a certain sense. Because $L_{i+1}$
		may have many more elements than $L_i$, it is necessary to use an increasing
		function $m_i$ and embed $\Theta_j(L_{i+1})$ into $\Theta_{m_i(j)}(L_i)$.

		The trees $T_{2i+1}$ can be used to satisfy the requirements (1),(2),(3) above. The
		trees $T_{2i+2}$ serve only the purpose of going from $L_i$ to $L_{i+1}$. For simplicity
		assume that only the trees with even subscripts are needed, so $T_{2i+2}$ is referred
		to as ``$T_{i+1}$'' in the following. First, the root $T_{i+1}(\emptyset)$ is taken to be any $T_i(\sigma$)
		with $|\sigma|>m_i(0)$. The reason is that then $T_{i+1}(\la x\ra)$ for $x\in\Theta_0(L_{i+1})$ can be
		defined as a string extending $T_i(\sigma*\la x\ra)$. The embedding of $\Theta(L_{i+1})$ into $\Theta(L_i)$
		can be considered to be an inclusion map. If $T_{i+1}(\la x\ra) = T_i(\sigma*\la x\ra^{m_i(1)-m_i(0)})$
		then the process can be continued in the same way at the next level of $T_{i+1}$.

		In the construction of initial segments of the Turing degrees a notion of homogeneity arises
		(see for example Lerman \cite{r11}). With assistance from Pavel
		Pudl\'ak and Ralph McKenzie, a lattice theoretic way to understand this notion
		was discovered. The most natural notion of homogeneity is slightly different
		and weaker than the notion of weak homogeneity in Lerman's book, but is still
		sufficient for the needs of initial segment constructions once a small addition to
		such constructions is made. The lattice theoretical meaning of this new notion
		is that a set of equivalence relations which form a lattice under inclusion has the
		homogeneity property just in case the set of equivalence relations is the congruence lattice of an algebra
		(for definitions and properties of congruence lattices
		see \cite{r3}), a fundamental concept of universal algebra.

		The new notion of homogeneity is the following.
		\begin{df}\label{df: 3.1}
			$\Theta$ is called \emph{homogeneous} if the following condition obtains.
			Suppose for each $k\in L$, $a\equiv_k b\Rightarrow c\equiv_k d$.
			Then there exist $z_1,\dots, z_n$ and
			maps $f_i:\Theta\rightarrow\Theta$ preserving all the $\equiv_k$, such that
			\begin{enumerate}
				\item $\{f_0(a), f_0(b)\} = \{c, z_1\}$,
				\item $\{f_i(a), f_i(b)\} = \{z_i, z_{i+1}\}$, and
				\item $\{f_n(a), f_n(b)\} = \{z_n, d\}$.
			\end{enumerate}
		\end{df}

		Note that only equality of sets is required here, as opposed to equality of
		ordered pairs.

		\begin{df}\label{df: 3.2}
			If $A$ is a set, $f$ is a binary operation on $A$ and $E$ is an
			equivalence relation on $A$, then $E$ is a \emph{congruence relation} if whenever $xEx'$ and
			$yEy'$ then $f(x, y)Ef(x' , y' )$.
		\end{df}

		The set of all congruence relations on a given set may be ordered by inclusion
		if an equivalence relation is considered as a set of pairs. The resulting partial
		order is always a lattice. A classic example: the lattice of normal subgroups of
		a group $G$ is isomorphic to the congruence lattice of $G$. The isomorphism sends
		the subgroup $H$ to the relation $xEy\Leftrightarrow xy^{-1}\in H$.

		The following observation may be credited to Mal'cev \cite{r12}, \cite{r13}.
		\begin{lem}\label{lem: 3.3}
			The congruence lattice of an algebra is homogeneous.
		\end{lem}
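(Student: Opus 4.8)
The plan is to recognise Lemma~\ref{lem: 3.3} as nothing more than Mal'cev's classical description of principal congruences, transcribed into the notation of Definition~\ref{df: 3.1}. Suppose then that $L$, as a lattice, is the congruence lattice $\mathrm{Con}(\mathbf A)$ of an algebra $\mathbf A=(\Theta;F)$ on the set $\Theta$, with $\equiv_k$ the congruence corresponding to $k\in L$ (and ``$f:\Theta\to\Theta$ preserves all the $\equiv_k$'' read as: $x\equiv_k y$ implies $f(x)\equiv_k f(y)$ for every $k\in L$). The first observation is that every \emph{unary polynomial operation} of $\mathbf A$ — every map of the form $x\mapsto t^{\mathbf A}(x,u_1,\dots,u_m)$ with $t$ a term and $u_1,\dots,u_m\in\Theta$ — preserves all the $\equiv_k$, since each congruence is reflexive and is compatible with all term operations of $\mathbf A$. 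Hence it suffices to find the maps $f_i$ required by Definition~\ref{df: 3.1} among the unary polynomials of $\mathbf A$.

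The key reduction is immediate. The hypothesis of Definition~\ref{df: 3.1} asserts that every congruence $\equiv_k$ containing the pair $(a,b)$ also contains the pair $(c,d)$. Congruences are closed under arbitrary intersection, so the principal congruence $\theta=\mathrm{Cg}^{\mathbf A}(a,b)$ — the least congruence of $\mathbf A$ containing $(a,b)$ — is itself one of the $\equiv_k$; applying the hypothesis to it gives $(c,d)\in\theta$.

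Next I would invoke Mal'cev's congruence-generation lemma (\cite{r12},~\cite{r13}; see also~\cite{r3}): for any algebra $\mathbf A$ and any $a,b,c,d\in\Theta$, one has $(c,d)\in\mathrm{Cg}^{\mathbf A}(a,b)$ if and only if either $c=d$, or there are a finite sequence $c=z_0,z_1,\dots,z_{n+1}=d$ of elements of $\Theta$ and unary polynomial operations $f_0,\dots,f_n$ of $\mathbf A$ with $\{f_i(a),f_i(b)\}=\{z_i,z_{i+1}\}$ for every $i\le n$. Reading off $z_1,\dots,z_n$ and $f_0,\dots,f_n$ from this chain gives clauses~(1)--(3) of Definition~\ref{df: 3.1} verbatim, while the degenerate case $c=d$ is covered by taking $n=0$ with $f_0$ the constant polynomial with value $c$, for which $\{f_0(a),f_0(b)\}=\{c\}=\{c,d\}$. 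Since each $f_i$ preserves all the $\equiv_k$ by the first paragraph, this completes the proof.

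The genuine content of Lemma~\ref{lem: 3.3} lies less in this argument than in having arrived at Definition~\ref{df: 3.1} in the first place: once homogeneity is phrased in terms of \emph{unordered} pairs linked by congruence-preserving maps, the lemma becomes exactly Mal'cev's chain condition. The one point that still needs a moment's care is that the unordered-pair formulation is precisely what absorbs the orientation ambiguity in a Mal'cev chain — at a given link one may have to transport $b$ rather than $a$ onto $z_i$ — and this is harmless here because the relations $\equiv_k$ are symmetric, so the hypothesis of Definition~\ref{df: 3.1} is symmetric in $a$ and $b$. The rest is the standard proof of Mal'cev's lemma: the set of pairs reachable from $(a,b)$ by applying unary polynomials and then closing under reflexivity, symmetry and transitivity is already a congruence, hence equals $\mathrm{Cg}^{\mathbf A}(a,b)$, which is what licenses the chain decomposition used above.
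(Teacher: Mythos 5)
Your proposal is correct and follows essentially the same route as the paper's proof sketch: reduce to showing $(c,d)$ lies in the principal congruence $\mathrm{Cg}^{\mathbf A}(a,b)$ (since that congruence is one of the $\equiv_k$, being the intersection of all those containing $(a,b)$), then invoke Mal'cev's chain description of principal congruences to read off the maps $f_i$ of Definition~\ref{df: 3.1}. Your write-up is more careful on two small points — you correctly identify the transporting maps as unary \emph{polynomials} rather than the paper's loosely worded ``homomorphism of $\Theta$'', and you handle the degenerate case $c=d$ explicitly — but the underlying argument is the same.
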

		\begin{proof}[Proof sketch]
			Suppose $a, b, c, d\in\Theta$ and $a\equiv_k b \rightarrow c\equiv_k d$ for each $k\in L$.
			Then $\la c, d\ra$ is in the equivalence relation $E$ generated by $\la a, b\ra$ within $\Theta$. But
			since $\Theta$ is a congruence lattice, $E$ is identical to the congruence relation on $\Theta$
			generated by $\la a, b\ra$. So $\la c, d\ra$ is in the equivalence relation generated by pairs
			$\la f(a), f(b)\ra$ for $f$ homomorphism of $\Theta$.
		\end{proof}
		\begin{rem}\label{rem: 3.4}
			The following is an open problem in lattice theory which has
			a bearing on initial segment constructions. Call a finite structure in a finite
			language with no relation symbols a finite algebra. Say that a finite lattice is
			CLFA if it is isomorphic to the congruence lattice of some finite algebra. Is
			every finite lattice CLFA? In fact, given $n\in\omega$, let $M_n$ be the lattice which
			has an antichain of size $n$, a top and a bottom. It is not known whether or
			not every $M_n$ is CLFA. The class CLFA includes the finite lattices that have a
			homogeneous lattice table in the sense of Lerman \cite{r11}. Lachlan \cite{r8} makes a claim
			that is equivalent to saying that every CLFA is an ideal in $\mc D$, but does not give
			the construction as Lerman \cite{r10} had already shown that every finite lattice is an
			ideal in $\mc D$, using an infinite representation $\Theta$. With finite $\Theta$, it seems CLFA is
			all one can get. Whether $\Theta$ is finite or infinite corresponds to whether the trees
			have branching rates bounded by a constant, or a by a recursive function.
		\end{rem}

		Once it is realized that the congruence lattice of an algebra is homogeneous,
		it is natural to consider Pudl\'ak's paper \cite{r16} where a representation of a finite
		(or more generally, an algebraic) lattice as a congruence lattice is realized as a
		union of a chain $\Theta_0\subseteq\Theta_1\subseteq\dots$. The question occured to us whether Pudl\'ak's
		construction also had the property that $\Theta(L_{i+1})$ could be embedded into $\Theta(L_i)$
		in an appropriate sense. A slight modication of Pudl\'ak's construction turned
		out to have this property. The resulting representation is signicantly simpler
		than the one used by Lachlan and Lebeuf and in subsequent work. Instead of
		building all the properties needed of the representation into it by brute force, a
		representation is used which was created for a different and simpler purpose but
		which has these properties naturally. For an analogy, one can imagine a natural
		solution to Post's problem compared to the Friedberg-Muchnik solution.
		\begin{figure}\label{Figure 1}
			\setlength{\unitlength}{4144sp}%
\begingroup\makeatletter\ifx\SetFigFont\undefined%
\gdef\SetFigFont#1#2#3#4#5{%
  \reset@font\fontsize{#1}{#2pt}%
  \fontfamily{#3}\fontseries{#4}\fontshape{#5}%
  \selectfont}%
\fi\endgroup%
\begin{picture}(2825,2599)(113,-848)
\put(901,389){\makebox(0,0)[lb]{\smash{\SetFigFont{12}{14.4}{\rmdefault}{\mddefault}{\updefault}\special{ps: gsave 0 0 0 setrgbcolor}$b$\special{ps: grestore}}}}
\thinlines
\special{ps: gsave 0 0 0 setrgbcolor}\put(1576,1064){\line(-1, 2){225}}
\put(1351,1514){\line( 2, 1){450}}
\put(1801,1739){\line( 2,-1){450}}
\put(2251,1514){\line(-1,-2){225}}
\special{ps: grestore}\special{ps: gsave 0 0 0 setrgbcolor}\put(451,164){\line(-1, 2){225}}
\put(226,614){\line( 2, 1){450}}
\put(676,839){\line( 2,-1){450}}
\put(1126,614){\line(-1,-2){225}}
\special{ps: grestore}\special{ps: gsave 0 0 0 setrgbcolor}\put(1351,-61){\line(-1, 2){225}}
\put(1126,389){\line( 2, 1){450}}
\put(1576,614){\line( 2,-1){450}}
\put(2026,389){\line(-1,-2){225}}
\special{ps: grestore}\special{ps: gsave 0 0 0 setrgbcolor}\put(2251,164){\line(-1, 2){225}}
\put(2026,614){\line( 2, 1){450}}
\put(2476,839){\line( 2,-1){450}}
\put(2926,614){\line(-1,-2){225}}
\special{ps: grestore}\special{ps: gsave 0 0 0 setrgbcolor}\put(901,-736){\line( 1, 0){450}}
\special{ps: grestore}\special{ps: gsave 0 0 0 setrgbcolor}\put(676, 52){\vector(-2, 3){  0}}
\put(676, 52){\vector( 2,-3){450}}
\special{ps: grestore}\special{ps: gsave 0 0 0 setrgbcolor}\put(1576,-173){\vector( 3, 4){  0}}
\put(1576,-173){\vector(-3,-4){337.680}}
\special{ps: grestore}\special{ps: gsave 0 0 0 setrgbcolor}\put(2476, 52){\vector( 3, 2){  0}}
\put(2476, 52){\vector(-3,-2){1012.846}}
\special{ps: grestore}\put(1576,1289){\makebox(0,0)[lb]{\smash{\SetFigFont{12}{14.4}{\rmdefault}{\mddefault}{\updefault}\special{ps: gsave 0 0 0 setrgbcolor}$\Theta_2$\special{ps: grestore}}}}
\put(451,-736){\makebox(0,0)[lb]{\smash{\SetFigFont{12}{14.4}{\rmdefault}{\mddefault}{\updefault}\special{ps: gsave 0 0 0 setrgbcolor}$\Theta_0$\special{ps: grestore}}}}
\put(1463,277){\makebox(0,0)[lb]{\smash{\SetFigFont{12}{14.4}{\rmdefault}{\mddefault}{\updefault}\special{ps: gsave 0 0 0 setrgbcolor}$\Theta_1$\special{ps: grestore}}}}
\put(113,389){\makebox(0,0)[lb]{\smash{\SetFigFont{12}{14.4}{\rmdefault}{\mddefault}{\updefault}\special{ps: gsave 0 0 0 setrgbcolor}$a$\special{ps: grestore}}}}
\put(338,727){\makebox(0,0)[lb]{\smash{\SetFigFont{12}{14.4}{\rmdefault}{\mddefault}{\updefault}\special{ps: gsave 0 0 0 setrgbcolor}$b$\special{ps: grestore}}}}
\put(901,727){\makebox(0,0)[lb]{\smash{\SetFigFont{12}{14.4}{\rmdefault}{\mddefault}{\updefault}\special{ps: gsave 0 0 0 setrgbcolor}$a$\special{ps: grestore}}}}
\put(1126,-848){\makebox(0,0)[lb]{\smash{\SetFigFont{12}{14.4}{\rmdefault}{\mddefault}{\updefault}\special{ps: gsave 0 0 0 setrgbcolor}$c$\special{ps: grestore}}}}
\special{ps: gsave 0 0 0 setrgbcolor}\put(1801,1064){\vector(-1, 1){  0}}
\put(1801,1064){\vector( 1,-1){337}}
\special{ps: grestore}\end{picture}
			\caption{Pudl\'ak's construction.}
		\end{figure}

		Pudl\'ak's construction of a representation $\Theta(L)$, where $L$ is now a finite lattice,
		is that of a colored graph, where the colors are elements of $L$. In $\Theta_0$ there is
		just a single edge labelled by an element of $L$ other than 0. Then inductively, in
		$\Theta_{j+1}$, for each edge of $\Theta_j$ labelled $a\in L$ there are four additional edges glued
		together so as to form a pentagon with edges labelled $a, b, c, b, c$, for any pair
		$b, c$ such that $a\cup b\geq c$, as illustrated in Figure \ref{Figure 1}. Equivalence relations $\equiv_m$ for
		$m\in L$ are defined by: $x\equiv_m y$ if there is a path from $x$ to $y$ all of whose edges
		have colors $\leq m$.

		To embed $\Theta(L_{i+1})$ into $\Theta(L_i)$, an apparent problem might be that the natural
		$\varphi_i$ goes from $L_i$ to $L_{i+1}$ which is the opposite direction. There is a natural way to
		embed $\Theta(L_i)$ into $\Theta(L_{i+1})$; simply change the color of any edge labelled $a\in L_i$
		to $\varphi_i(a)$. However, this embedding corresponds to the equivalence $m \leq k$
		just in case $\equiv_m\subseteq\equiv_k$ for the Pudl\'ak representation $\Theta$ and not
		just in case $\equiv_m\supseteq\equiv_k$.
		Instead the dual $L_i^*$ of $L_i$ should be considered, and $\Theta(L_i^*)$ used as representation
		of $L_i$. To embed $\Theta(L_{i+1}^*)$ into $\Theta(L_i^*)$, an usl homomorphism from $L_{i+1}^*$ to $L_i^*$ is
		needed, i.e. a map from $L_{i+1}$ to $L$ preserving meet $\cap$ and greatest element 1.
		Fortunately the following lemma is available.
		\begin{lem}\label{lem: 3.5}
			The category of finite usls $L$ with usl homomorphisms $\varphi$ is self-dual under the contravariant functor
			taking $L$ to $L^*$ and  $\varphi$ to $\varphi^*$, where $\varphi^*$ is the
			\emph{Galois adjoint} \cite{r2} of $\varphi$ given by $\varphi(a)\leq x\leftrightarrow a \leq\varphi^*(x)$.
		\end{lem}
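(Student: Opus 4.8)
The plan is to treat $(-)^*$ as a genuine involutive contravariant functor and to verify the four things this requires: that it is well-defined on objects, well-defined on arrows, functorial, and squares to the identity. First, on objects. A finite usl $L=(L,*,e)$ is automatically a finite lattice: $e$ is a least element and every finite subset has a join, hence also a meet---namely the join of its lower bounds, the set of which is nonempty because it contains $e$---so in particular $L$ has a greatest element $1=\bigvee L$. The dual $L^*$ is the same underlying set with the reversed order; its join is the meet $\cap$ of $L$ and its unit is $1$, so $L^*=(L,\cap,1)$ is again a finite usl, and visibly $L^{**}=L$.

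Second, on arrows. Let $\varphi\colon L\to M$ be an usl homomorphism, so $\varphi$ preserves finite joins (hence is monotone) and $\varphi(e_L)=e_M$. For $x\in M$ put $D_x=\{a\in L\mid\varphi(a)\le x\}$. Then $e_L\in D_x$ since $\varphi(e_L)=e_M\le x$, and if $a,b\in D_x$ then $\varphi(a*b)=\varphi(a)*\varphi(b)\le x$, so $D_x$ is a nonempty join-closed subset of the finite lattice $L$ and therefore has a greatest element $\varphi^*(x):=\bigvee D_x\in D_x$. Monotonicity of $\varphi$ and maximality of $\varphi^*(x)$ in $D_x$ give $\varphi(a)\le x\leftrightarrow a\le\varphi^*(x)$, and this equivalence determines $\varphi^*\colon M\to L$ uniquely.

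Third, $\varphi^*$ is an usl homomorphism $M^*\to L^*$, i.e.\ it preserves the meet $\cap$ of $L$ and sends $1_M$ to $1_L$. For the unit, $\varphi^*(1_M)=\bigvee D_{1_M}=\bigvee L=1_L$ since $\varphi(a)\le 1_M$ for all $a$. For meets, the defining equivalence gives, for every $a$,
\[
 a\le\varphi^*(x\cap y)\;\leftrightarrow\;\varphi(a)\le x\cap y\;\leftrightarrow\;(\varphi(a)\le x)\wedge(\varphi(a)\le y)\;\leftrightarrow\;a\le\varphi^*(x)\cap\varphi^*(y),
\]
so $\varphi^*(x\cap y)=\varphi^*(x)\cap\varphi^*(y)$; as $\cap$ is the join and $1$ the unit of the dual usl, $\varphi^*$ is an usl homomorphism $M^*\to L^*$.

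Finally, functoriality and self-duality. Clearly $\mathrm{id}_L(a)\le x\leftrightarrow a\le x$ forces $(\mathrm{id}_L)^*=\mathrm{id}_{L^*}$, and chaining the defining equivalences for $\varphi\colon L\to M$ and $\psi\colon M\to N$ yields $(\psi\circ\varphi)(a)\le x\leftrightarrow a\le\varphi^*(\psi^*(x))$, hence $(\psi\circ\varphi)^*=\varphi^*\circ\psi^*$; thus $(-)^*$ is a contravariant functor from finite usls to finite usls. It remains to check $\varphi^{**}=\varphi$. Unwinding the definition of the adjoint of $\varphi^*\colon M^*\to L^*$ and using that the order of $L^*$ is the reverse of that of $L$, the equivalence defining $\varphi^{**}$ becomes $z\le_L\varphi^*(y)\leftrightarrow\varphi^{**}(z)\le_M y$; but the adjunction for $\varphi$ itself says $z\le_L\varphi^*(y)\leftrightarrow\varphi(z)\le_M y$, so $\varphi^{**}(z)\le y\leftrightarrow\varphi(z)\le y$ for all $y$, whence $\varphi^{**}(z)=\varphi(z)$. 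Therefore $(-)^*$ composed with itself is the identity functor, so $(-)^*$ exhibits the category of finite usls as self-dual. The content here is entirely routine residuation theory; the only points needing care are the direction bookkeeping (keeping track of whether an inequality is taken in $L$, in $M$, or in a dual) and the single essential use of finiteness, namely the existence of the greatest element of $D_x$---equivalently, the existence of all meets in $L$---which is exactly what makes the Galois adjoint $\varphi^*$ a total function.
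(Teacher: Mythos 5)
Your proof is correct and complete. The paper states Lemma~\ref{lem: 3.5} without any proof, so there is no in-text argument to compare against; what you supply is exactly the standard residuation argument one would expect. The key points are all present and handled correctly: a finite usl is automatically a finite lattice (hence has a top, so $L^*$ is again a finite usl); unitality of $\varphi$ makes each $D_x$ nonempty, and closure under $*$ together with finiteness makes $\max D_x$ exist, which is precisely where finiteness is used; the Galois equivalence then does all the remaining work, giving preservation of the dual join and unit, contravariant functoriality, and $\varphi^{**}=\varphi$ by uniqueness of adjoints. The direction bookkeeping in the last step (passing through $\le_{L^*}$ and $\le_{M^*}$) is the one place an error typically creeps in, and you have it right. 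Nothing is missing.
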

		The following is a partial reconstruction of the history of Pudl\'ak's construction.
		Whitman \cite{r23} showed every lattice embeds in a partition lattice.
		J\'onsson \cite{r7} simplied Whitman's construction.
		Gr\"atzer and Schmidt \cite{r4} characterized congruence lattices of algebras.
		Pudl\'ak proved that Gr\"atzer and Schmidt's proof could be simplied using a slight modication of J\'onsson's construction.
		%\footnote{
		%	The present file has been generated from
		%	\href{http://math.hawaii.edu/~bjoern/Publications/BSL-Kjos-Hanssen.pdf}{a more accurate pdf file (click here)}
		%	using InftyReader and AbiWord.
		%}
	\bibliographystyle{asl}
	\bibliography{BSL-Kjos-Hanssen-abiword}
\end{document}